\newtheorem{theorem}{Theorem}
\newtheorem{lemma}{Lemma}
\newtheorem{corollary}{Corollary}
\newtheorem{conjecture}{Conjecture}
\newtheorem{question}{Question}
\newcommand{\set}[1]{\ensuremath{\left\{#1 \right\}}}
\newcommand{\cac}{\ensuremath{\Psi}}
\newcommand{\chis}[1]{\chi_{\mathrm{st}}'(#1)}
\newcommand{\chisl}[1]{\mathrm{ch}_{\mathrm{st}}'(#1)}
\title{Note on list star edge-coloring of subcubic graphs}
\author
{
Borut Lu\v{z}ar\thanks{Faculty of Information Studies, Novo mesto, Slovenia. 
		E-Mail: \texttt{borut.luzar@gmail.com}}, \
	Martina Mockov\v{c}iakov\'{a}\thanks{European Centre of Excellence NTIS, University of West Bohemia, Pilsen, Czech Republic. \newline
		E-Mail: \texttt{mmockov@ntis.zcu.cz}}, \
	Roman Sot\'{a}k\thanks{Faculty of Science, Pavol Jozef \v Saf\'{a}rik University, Ko\v{s}ice, Slovakia.
		E-Mail: \texttt{roman.sotak@upjs.sk}}
}
\begin{document}
\maketitle

{
\abstract
{
	\textit{A star edge-coloring} of a graph is a proper edge-coloring without bichromatic paths and cycles of length four. 	
	In this paper, we consider the list version of this coloring and prove that the list star chromatic index of every subcubic graph
	is at most $7$, answering the question of Dvo\v{r}\'{a}k et al. 
	(Star chromatic index, J. Graph Theory 72 (2013), 313--326).
}

\bigskip
{\noindent\small \textbf{Keywords:} List star edge-coloring, star chromatic index, list star chromatic index, subcubic graph.}
}

\section{Introduction}

In this note, we consider graphs without loops but possible parallel edges (hence multigraphs). A $3$-regular graph is called \textit{cubic}, and
a graph with maximum degree $3$ is called \textit{subcubic}.
A proper edge-coloring of a graph $G$ is called a \textit{star edge-coloring} 
if there is neither bichromatic path nor bichromatic cycle of length four. The minimum number of colors for which $G$ admits a star edge-coloring
is called the \textit{star chromatic index} and it is denoted by $\chis{G}$.

The star edge-coloring was defined in 2008 by Liu and Deng~\cite{LiuDen08}, motivated by the vertex version 
(see e.g.~\cite{AlbChaKieKunRam04,BuCraMonRasWan09,CheRasWan13,Gru73}).
In 2013, Dvo\v{r}\'{a}k, Mohar and \v{S}\'{a}mal \cite{DvoMohSam13} determined (currently the best) upper and lower bounds for complete graphs. 
\begin{theorem}[Dvo\v{r}\'{a}k, Mohar, \v{S}\'{a}mal, 2013]
	\label{thm:complete}
	The star chromatic index of the complete graph $K_n$ satisfies
	$$
		2n (1 + o(1)) \le \chis{K_n} \le n \frac{2^{2\sqrt{2}(1+o(1))\sqrt{\log{n}}}}{(\log{n})^{1/4}}\,.
	$$
	In particular, for every $\epsilon >0$ there exists a constant $c$ such that $\chis{K_n} \le cn^{1+\epsilon}$ for every $n\ge 1$.
\end{theorem}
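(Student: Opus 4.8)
The plan is to prove the two bounds by completely different methods: the lower bound by a double-counting argument exploiting the local structure forced by the star condition, and the upper bound by an explicit algebraic colouring whose analysis reduces to a classical question on progression-free sets. Both directions rest on the same structural observation. In any star edge-colouring each colour class is a matching, and for any two colours $a,b$ the union $M_a\cup M_b$ is a disjoint union of paths, each of length at most $3$: even cycles are excluded because a bichromatic $C_4$ is forbidden outright and any longer bichromatic even cycle contains a bichromatic $P_5$, while any bichromatic path of length $\ge 4$ contains a forbidden length-$4$ subpath.

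For the lower bound, suppose $\chis{K_n}=k$. Counting incidences of the form \emph{(vertex $v$, unordered pair of colours both appearing at $v$)} gives exactly $n\binom{n-1}{2}$, since every vertex meets $n-1$ distinct colours. This same quantity equals $\sum_{\{a,b\}} I_{ab}$, where $I_{ab}$ is the number of degree-$2$ vertices of $M_a\cup M_b$. For a linear forest with $e$ edges and $p$ components the number of internal vertices is $e-p$; summing over pairs, with total edge count $(k-1)\binom{n}{2}$ and total number of components $P$, one obtains
\[ k \;=\; n-1+\frac{2P}{n(n-1)}. \]
Since every component has length at most $3$, the crude bound $P\ge e/3$ yields only $k\ge \tfrac32 n(1+o(1))$. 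Reaching the asserted $2n(1+o(1))$ is equivalent to showing that the average component length tends to $2$, i.e. that the total number of bichromatic paths of length exactly $3$ is $o(n^3)$; establishing this rarity, presumably via a finer analysis of the triangles of $K_n$ surrounding such a path, is the delicate point of this direction.

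For the upper bound I would construct an explicit colouring. Fix an integer $N$ and a set $A\subseteq\{1,\dots,N\}\subseteq\mathbb{Z}$ of size $n$ containing no three-term arithmetic progression, identify the vertices of $K_n$ with the elements of $A$, and colour the edge $\{x,y\}$ by the integer $x+y$. This uses at most $2N-1$ colours and is proper. It has no bichromatic $C_4$, since the alternating equalities $v_0+v_1=v_2+v_3$ and $v_1+v_2=v_3+v_0$ force $2v_0=2v_2$, hence $v_0=v_2$ over $\mathbb{Z}$, a contradiction. Most importantly it has no bichromatic $P_5$: along a path $v_0v_1v_2v_3v_4$ the alternating equalities $v_0+v_1=v_2+v_3$ and $v_1+v_2=v_3+v_4$ rearrange to $v_2-v_0=v_4-v_2$, so $v_0,v_2,v_4$ would form a nontrivial three-term progression inside $A$, which is impossible. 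Hence the colouring is a star edge-colouring and $\chis{K_n}\le 2N-1$.

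It then remains only to take $A$ as dense as possible, i.e. to make $N$ as small as possible for $|A|=n$, which is exactly Behrend's construction of progression-free sets: it provides such an $A$ with $N=n\cdot 2^{2\sqrt2(1+o(1))\sqrt{\log n}}/(\log n)^{1/4}$, the spurious factor $2$ from $2N-1$ being absorbed into the $(1+o(1))$ in the exponent. Substituting gives the claimed bound. The conceptual heart — and the step I expect to be the main obstacle — is the recognition that under the sum-colouring a bichromatic $P_5$ is \emph{equivalent} to a three-term arithmetic progression among the alternate vertices; once this dictionary is in place, the unusual shape of the upper bound is inherited wholesale from the extremal bounds for progression-free sets.
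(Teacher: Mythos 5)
First, a point of orientation: the paper you are commenting on does not prove this statement at all --- it is quoted as background from Dvo\v{r}\'{a}k, Mohar and \v{S}\'{a}mal --- so the relevant comparison is with their original argument. Your upper bound direction is correct, complete, and is in essence the known proof: identify the vertices with a $3$-AP-free set $A\subseteq\{1,\dots,N\}$, colour $\{x,y\}$ by $x+y$, note that a bichromatic path on five vertices forces $v_0,v_2,v_4$ to form a nontrivial three-term progression while a bichromatic $C_4$ forces $v_0=v_2$, and then invoke the Behrend--Elkin construction of dense progression-free sets. Your bookkeeping is also fine: the factor $2$ from $2N-1$, and even the $(\log n)^{1/4}$ term, are absorbed by the $(1+o(1))$ in the exponent, and the ``in particular'' clause follows since $2^{O(\sqrt{\log n})}=n^{o(1)}$.

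The lower bound, however, contains a genuine gap, which you yourself flag. Your double count is correct (the identity $k=n-1+\frac{2P}{n(n-1)}$ is exact), but it only yields $\chis{K_n}\ge\frac{3}{2}n(1+o(1))$, and the remaining claim --- that the total number of length-$3$ components in the unions $M_a\cup M_b$ is $o(n^3)$ --- is not a finishing touch but the entire difficulty of this direction of the theorem. It does not follow from the natural local refinements. For example, one can prove that if $wxyz$ is a path coloured $a,b,a$ then neither endpoint $w$ nor $z$ is incident with colour $b$ anywhere (a $b$-edge at an endpoint always closes a bichromatic $C_4$ or extends to a bichromatic path of length $4$), and one can then play the resulting ``missing colour'' counts against the identity $\sum_b (n-2|M_b|)=n(k-n+1)$; but every way of combining these constraints stalls around $1.6n$ (typical outcomes are $\frac{1+\sqrt{5}}{2}\,n$ or $\frac{8}{5}n$), the basic obstruction being that a single pair $(w,b)$ with $b\notin C_w$ can legally be the endpoint of linearly many $aba$-paths, so there is no cheap injection from length-$3$ paths to missing-colour pairs. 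Reaching $2n(1+o(1))$ therefore needs a genuinely finer argument than anything sketched in your proposal (consistent with the non-explicit $o(1)$ in the statement), and as written your proof establishes only the weaker bound $\frac{3}{2}n(1+o(1))$ together with a correct but unproven reduction.
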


From Theorem~\ref{thm:complete}, they also derived a near-linear upper bound in terms of the maximum degree $\Delta$ for general graphs.
In addition, they considered subcubic graphs and showed that their star chromatic index is at most $7$. 
\begin{theorem}[Dvo\v{r}\'{a}k, Mohar, \v{S}\'{a}mal, 2013]~
	\begin{itemize}
		\item[$(a)$] If $G$ is a subcubic graph, then $\chis{G} \le 7$.
		\item[$(b)$] If $G$ is a simple cubic graph, then $\chis{G} \ge 4$, and the equality holds
			if and only if $G$ covers the graph of the $3$-cube.
	\end{itemize}
\end{theorem}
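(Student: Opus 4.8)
The plan is to prove the two parts by rather different means, but both rest on one reformulation: for any proper edge-coloring with color classes (matchings) $M_1,M_2,\dots$, the coloring is a star edge-coloring if and only if for every pair of colors $i,j$ each component of the subgraph $M_i\cup M_j$ is a path with at most three edges. This is because $M_i\cup M_j$ is always a disjoint union of paths and even cycles, and any even cycle, or any path with at least four edges, contains a bichromatic path or cycle on four edges, whereas a path with at most three edges does not.

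For the lower bound in $(b)$, suppose a simple cubic $G$ had a star edge-coloring with $3$ colors. Then the three color classes are perfect matchings, so each $M_i\cup M_j$ is $2$-regular, hence a disjoint union of cycles, each of length at least $4$ because $G$ is simple. By the reformulation this is impossible, so $\chis{G}\ge 4$.

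For the characterization of equality I would study an arbitrary $4$-star-edge-coloring $\varphi$ of a cubic graph. Each vertex misses exactly one color; write $\mu(v)$ for it. For a fixed pair $i,j$ a vertex is interior to a path of $M_i\cup M_j$ precisely when $\mu(v)\notin\{i,j\}$ and is an endpoint precisely when $\mu(v)\in\{i,j\}$. Requiring that all six graphs $M_i\cup M_j$ consist only of paths with at most three edges is very restrictive, and the heart of this direction is to show it forces the colored neighborhood of every vertex, together with the pattern of missing colors across every edge, to coincide---after a single global relabeling of the four colors---with the pattern occurring in a fixed $4$-star-edge-coloring of $Q_3$. Granting this rigidity, the forward implication follows: sending each vertex of $G$ to the vertex of $Q_3$ carrying the matching colored star, and colors to colors, yields a map that is a bijection on every closed neighborhood, i.e. a covering projection $G\to Q_3$. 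For the reverse implication I would instead pull a fixed $4$-star-edge-coloring of $Q_3$ back along a given covering $p\colon G\to Q_3$; since $p$ is a local isomorphism the pullback is proper, and since $Q_3$ has girth $4$ any bichromatic four-edge path or cycle in $G$ projects to a bichromatic four-edge path or a bichromatic $C_4$ in $Q_3$, which cannot exist---so the pullback is a star edge-coloring and $\chis{G}=4$.

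For part $(a)$ I would use induction on $|E(G)|$ via a minimal counterexample $G$ to $\chis{G}\le 7$. First I would show $\delta(G)\ge 3$, so that $G$ is cubic: if $v$ has degree at most $2$, then an edge at $v$ has fewer than $7$ forbidden colors---those of its (at most four) adjacent edges together with those completing a bichromatic four-edge path or $C_4$ through it---so a $7$-star-edge-coloring of the smaller graph, obtained by induction, extends. For cubic $G$ I would delete an edge $uv$, color $G-uv$ inductively, and re-extend the color of $uv$. The main obstacle is precisely this extension: with degree $3$ the forbidden colors---properness at $u$ and $v$, plus all choices completing a bichromatic four-edge path through $uv$ or a bichromatic $C_4$---can come close to exhausting the palette of $7$, so a direct count need not leave a free color. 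I expect to resolve the tight cases by a careful analysis of the colors on the edges within distance two of $uv$, recoloring a bounded number of them by Kempe-type swaps between two color classes to release a color for $uv$; the bulk of the work, and the genuine difficulty, lies in checking that such a swap is always available across the finitely many local configurations.
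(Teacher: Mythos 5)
Your opening reformulation (a proper edge-coloring is a star edge-coloring iff every component of every $M_i\cup M_j$ is a path with at most three edges) is correct for simple graphs, and your lower-bound argument in part $(b)$ is complete and correct --- that is the standard argument. (Note only that this paper allows parallel edges, so in the multigraph setting the reformulation must also permit digon components; this does not affect $(b)$.) The two substantial claims, however, both have genuine gaps. In part $(a)$, the degree-$2$ reduction is already wrong as stated: for an edge $e=uv$ with $d(v)=2$, $d(u)=3$, the number of forbidden colors is \emph{not} smaller than $7$. If the edge $f_3$ at $v$ and one edge $f_1$ at $u$ carry the same color $1$, then every color on the two far-end neighbors of $f_1$ and the two far-end neighbors of $f_3$ is unconditionally forbidden (each completes a bichromatic path with $e$ as a middle edge), and the far-end neighbors of the second edge at $u$ can contribute more; colors $1,2,\dots,7$ can all be excluded, so the extension can genuinely fail. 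This is not a fixable accident of counting: purely local extension arguments of this kind are exactly what yields the bound $8$ of Kerdjoudj, Kostochka and Raspaud quoted in the paper, and the whole difficulty of reaching $7$ is that the extension step breaks. Your plan defers precisely this difficulty to unspecified ``Kempe-type swaps across finitely many local configurations,'' i.e., the hard part of the theorem is assumed rather than proven. For comparison: the paper you are commenting on never proves this cited theorem, but its own Theorem~\ref{thm:main} (the list version) subsumes part $(a)$, and its method shows what a working proof looks like --- it is global, not local: Petersen's theorem splits the (super)graph into a $2$-factor, organized into a spanning cactus, plus a matching; the matching is colored greedily so that edges within distance two differ, and the cycles are then colored from their remaining lists of size at least $3$ using Lemmas~\ref{lem:2} and~\ref{lem:3}.

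In part $(b)$, the reverse implication is essentially sound: pulling back a star $4$-edge-coloring of $Q_3$ along a covering works, provided you exhibit such a coloring of $Q_3$ and observe that the projection of a path cannot backtrack (a covering is a local isomorphism on edge stars), so a bichromatic $4$-edge path upstairs projects to a bichromatic $4$-edge path or $4$-cycle downstairs, using that $Q_3$ is simple with girth $4$. But the forward implication --- that $\chis{G}=4$ for a simple cubic $G$ forces a covering map $G\to Q_3$ --- is the heart of the characterization, and you explicitly assume it (``granting this rigidity''). Nothing in your proposal indicates how the star condition and the missing-color function $\mu$ force the local color patterns to agree, up to one global permutation, with those of $Q_3$; as it stands this direction is a restatement of what must be shown, not a proof.
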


Regarding subcubic graphs, Bezegov\'{a} et al.~\cite{BezLuzMocSotSkr16} proved that $4$ and $5$ colors are enough for 
every subcubic tree and subcubic outerplanar graph, respectively, and the bound for both is tight.

While there exist subcubic graphs with the star chromatic index equal to $6$, e.g. $K_4$ with one subdivided edge,
$K_{3,3}$, and the complement of $C_6$
(see Fig.~\ref{fig:6colors}), 
no example of a subcubic graph requiring $7$ colors is known.
\begin{figure}[ht!]
	$$
		\includegraphics{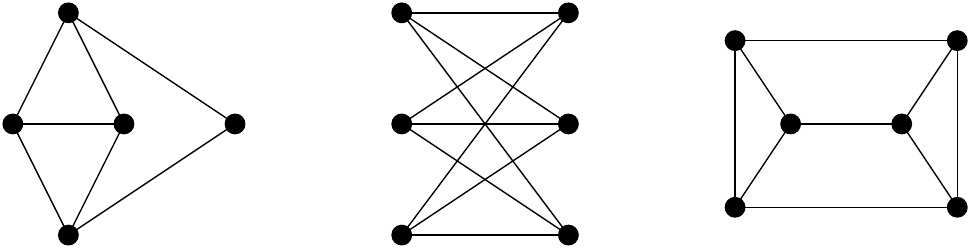}
	$$
	\caption{Subcubic graphs with star chromatic index equal to $6$.}
	\label{fig:6colors}
\end{figure}
Based on this fact, Dvo\v{r}\'{a}k et al. proposed the following conjecture.
\begin{conjecture}[Dvo\v{r}\'{a}k, Mohar, \v{S}\'{a}mal, 2013]
	\label{conj:cub}
	If $G$ is a subcubic graph, then $\chis{G}~\le~6$.
\end{conjecture}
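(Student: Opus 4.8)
The plan is to argue by contradiction through a minimal counterexample combined with a discharging argument, the standard framework for bounds of this type. Suppose the conjecture fails and let $G$ be a counterexample with the fewest edges; then $G$ is connected, and the first goal is to reduce to the case where $G$ has minimum degree at least $2$. A pendant edge $uv$ with $d(u)=1$ can be deleted, the resulting smaller graph star edge-coloured with six colours by minimality, and the colour of $uv$ then chosen to avoid the at most two edges incident with $v$ (properness) together with the bounded number of colours ruled out by the two-coloured paths of length four through $uv$; since six colours comfortably exceed these forbidden values, such an edge is reducible. One then shows that long chains of degree-$2$ vertices, and degree-$2$ vertices in suitable local positions, are likewise reducible by suppressing them, colouring the smaller graph, and reinserting, so that $G$ may be assumed cubic.

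The heart of the argument is a catalogue of reducible configurations around degree-$3$ vertices. For each candidate configuration one deletes or contracts a few edges to obtain a smaller graph $G'$, invokes minimality to find a six-colour star edge-colouring of $G'$, and extends it to $G$ while allowing a bounded amount of local recolouring. The feasibility of each extension rests on a palette count: an uncoloured edge $e=uv$ is constrained by the at most four edges adjacent to it through properness, and by the further star conditions forbidding two-coloured paths and cycles of length four through $e$; one checks that in the configuration under study six colours always leave at least one admissible colour for $e$, so the extension goes through. Verifying that each configuration in the catalogue is reducible is technical but routine.

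The serious difficulty lies in the discharging phase, and it is here that I expect the genuine obstacle --- indeed the reason the conjecture is still open. Because the statement concerns all subcubic graphs and not merely planar ones, no Euler-formula face charges are available, so any discharging must rest on vertex degrees alone. For a cubic graph every vertex carries identical charge, and a purely degree-based scheme extracts no global contradiction: reducibility of the low-degree configurations forces $G$ to be cubic, but then no sparsity remains to exploit. To break this impasse I would abandon discharging in the cubic case and exploit structure instead: assuming $G$ bridgeless, Petersen's theorem provides a perfect matching $M$ whose complement is a $2$-factor, a disjoint union of cycles. One would star-colour the cycles of the $2$-factor from a small palette and colour the matching edges of $M$ from the remaining colours, the delicate point being to forbid the two-coloured paths that alternate between cycle edges and matching edges. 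Controlling these mixed bichromatic paths across the matching, together with a separate treatment of bridges, is the step I expect to be the main obstacle, and where an idea genuinely beyond the list-colouring method of the present paper will be needed.
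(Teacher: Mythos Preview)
The statement you are attempting to prove is labelled \emph{Conjecture} in the paper, and the paper does not prove it; it establishes only the weaker bound $\chisl{G}\le 7$ and states explicitly in the Discussion that the $6$-colour question, even for planar subcubic graphs, remains open. So there is no ``paper's own proof'' to compare against, and what you have written is not a proof but a research plan that candidly flags its own main obstacle.

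Beyond that, there is a concrete error in your first reduction. For a pendant edge $uv$ with $d(u)=1$ and $d(v)=3$, the two edges at $v$ forbid two colours by properness, and each of the up to four edges at distance~$2$ can forbid a further colour via the star condition (namely, $uv$ must avoid $\sigma(wx)$ whenever some edge at $x$ repeats $\sigma(vw)$). In the worst case these six forbidden colours are pairwise distinct, so six colours do \emph{not} ``comfortably exceed'' them; the greedy extension can fail, and the pendant edge is not obviously reducible with only six colours. This is precisely the margin that separates the proved bound $7$ from the conjectured $6$.

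Your fallback structural approach---decompose a bridgeless cubic graph via Petersen into a perfect matching and a $2$-factor, colour each piece separately, and control the mixed bichromatic paths---is exactly the mechanism the paper uses to obtain $7$. The Discussion section even notes the partial result you are heading toward: if the $2$-factor has no $5$-cycles and the matching is $3$-colourable at distance $2$, then six colours suffice. So your outline rediscovers the known framework and its known limitation without supplying the missing idea; as you yourself say, something genuinely new is required, and the proposal does not provide it.
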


In~\cite{DvoMohSam13} the authors suggested to study the list version of star edge-coloring.
We say that $L$ is an \textit{edge-list-assignment} for the graph $G$ if it assigns a list $L(e)$ of possible colors to each edge $e$ of $G$. 
If each of the lists is of size at most $k$, we call $L$ a \textit{$k$-edge-list-assignment}.
If $G$ admits a star edge-coloring $\sigma$ such that $\sigma(e) \in L(e)$ for all edges in $E(G)$, 
then we say that $G$ is \textit{star $L$-edge-colorable} or $\sigma$ is a {\em star $L$-edge-coloring} of $G$. 
The graph $G$ is \textit{star $k$-edge-choosable} if it is star $L$-edge-colorable for every edge-list-assignment $L$, 
where $|L(e)| \ge k$ for every $e \in E(G)$. 
The \textit{list star chromatic index} $\chisl{G}$ of $G$ is the minimum $k$ such that $G$ is star $k$-edge-choosable. 

The current paper has been motivated by the following question of Dvo\v{r}\'{a}k et al.
\begin{question}[Question 3 in~\cite{DvoMohSam13}]
	\label{q:main}
	Is it true that $\chisl{G} \le 7$ for every subcubic graph $G$? (Perhaps even $\le6$?)
\end{question}
Currently best reported upper bound is due to Kerdjoudj, Kostochka, and Raspaud~\cite{KerKosRas16} set at $8$ colors.
These authors also derived the bounds on the list star chromatic index of subcubic graphs in terms of the maximum average degree $\rm{mad}(G)$.
In this paper, we answer Question~\ref{q:main} in affirmative, i.e. we prove that the list star chromatic index of such graphs is at most 7.
\begin{theorem}
	\label{thm:main}
	Let $G$ be a graph with maximum degree $3$. Then
	$$
		\chisl{G} \le 7\,.
	$$
\end{theorem}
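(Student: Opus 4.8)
The plan is to argue by contradiction via the discharging method. Suppose the theorem fails and let $G$ be a counterexample with the fewest edges, equipped with a $7$-edge-list-assignment $L$ admitting no star $L$-edge-coloring; we may assume $G$ is connected. The first reduction is $\delta(G)\ge 2$: if $v$ has degree $1$ with incident edge $e$, then $G-e$ is star $L$-colorable by minimality, and when re-inserting $e$ we must avoid at most $2$ colors of the edges meeting $e$ together with at most $4$ colors coming from bichromatic $P_5$'s having $e$ as an end-edge (a pendant edge lies on no $C_4$ and sits at the end of every path through it). Since $2+4=6<7$, a color survives. Throughout, the crucial feature to track is that the star condition is non-local: coloring an edge interacts with all edges within distance two, so the number of colors it forbids must be accounted for through the four positions an edge can occupy in a bichromatic $P_5$ and through any $C_4$ on which it lies.

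The bulk of the work is a list of reducible configurations, each a local structure that cannot occur in $G$, with degree-$2$ vertices and their surroundings as the primary targets. To reduce a degree-$2$ vertex $v$ with neighbours $a,b$, I would pass to the smaller multigraph obtained by deleting $v$ and, where useful, adding the edge $ab$ (parallel edges are permitted), transfer an appropriate list to the new edge, color by minimality, and then color the two edges $va$ and $vb$. A naive greedy extension does \emph{not} suffice: at a spot surrounded by degree-$3$ vertices the four $P_5$-positions together with the $C_4$- and properness-constraints can forbid as many as a dozen colors, well above $7$. The content of each reducibility claim is therefore to choose the configuration so that these forbidden colors provably coincide, bringing the genuine count down to at most $6$, or else to uncolor and simultaneously recolor a bounded set of neighbouring edges and show by a short case analysis that the resulting small extension problem is always solvable. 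In this way I expect to forbid, among others, adjacent degree-$2$ vertices, longer ``threads'' of degree-$2$ vertices, and triangles or short cycles carrying degree-$2$ vertices.

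With the reducible configurations in hand I would finish by discharging. Assign to each vertex the charge $\mu(v)=d(v)-3\le 0$ and let the redistribution rules send charge to degree-$2$ vertices from nearby degree-$3$ vertices; the reducibility results guarantee that each degree-$2$ vertex has enough well-structured degree-$3$ neighbours to reach charge $\ge 0$ while every degree-$3$ vertex retains charge $\ge 0$. This forces $0\le\sum_v\mu(v)=2|E(G)|-3|V(G)|\le 0$, so $G$ must be cubic. The cubic case is then the delicate endgame: with no degree-$2$ vertices to reduce, I would supply a dedicated family of reducible configurations built from short cycles and adjacent triangles (exactly the regime inhabited by the known extremal examples $K_{3,3}$, $\overline{C_6}$, and $K_4$ with a subdivided edge) and verify each reduces by the same recoloring technique.

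I expect the main obstacle to be the reducibility proofs themselves rather than the discharging bookkeeping. Because the star condition reaches distance two, re-inserting even a single edge can meet far more than $7$ forbidden colors, so no reduction can be a pure greedy extension: each must either exploit a structural coincidence that collapses the forbidden set or perform a controlled local recoloring. Proving that such a recoloring always exists, i.e. that a bounded partial star-coloring can always be completed within the prescribed lists, is where the genuine case analysis lives and where the sensitivity of the constant $7$ shows up. Producing a configuration set that is simultaneously all-reducible and complete for the discharging, in particular one that also settles the cubic case, is the second source of difficulty.
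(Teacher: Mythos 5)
Your proposal is a plan rather than a proof: none of the reducible configurations is actually verified, and the entire weight of the argument is deferred to ``a short case analysis'' that is never carried out. Beyond incompleteness, though, there is a structural flaw that would prevent the plan from being completed as stated. Your discharging assigns charge $d(v)-3\le 0$, and since degree-$3$ vertices carry charge $0$ they cannot send anything while retaining nonnegative charge; so the discharging phase can at best prove that a minimal counterexample has no degree-$2$ vertices, i.e.\ is cubic. You acknowledge that the cubic case is the ``delicate endgame'' and propose to handle it with reducible configurations ``built from short cycles and adjacent triangles.'' This cannot work: there exist cubic graphs of arbitrarily large girth, so no finite family of configurations, each containing a cycle of bounded length, is unavoidable in cubic graphs. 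Unavoidability of short cycles can be forced by Euler's formula in the planar setting, or by a maximum-average-degree hypothesis (this is exactly the regime in which Kerdjoudj, Kostochka and Raspaud obtained their bound of $8$), but for general subcubic graphs the degree-sum identity gives you nothing once $G$ is cubic. You would be left having to prove that some configuration present in \emph{every} cubic graph --- essentially a tree-like ball of bounded radius --- is reducible, which is the original problem all over again.

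The paper avoids this dead end by arguing globally rather than locally. For a $2$-connected subcubic graph it passes to a supergraph $\widehat{G}$ with at most one vertex of degree $2$; Petersen's theorem then gives a $2$-factor, hence a decomposition of $\widehat{G}$ into a spanning cactus $\cac$ (disjoint cycles joined by connector edges) and a matching $M'$. The matching and the connectors are colored greedily so that edges at distance at most $2$ get distinct colors (each such edge sees at most four relevant matching edges, so lists of size $7$ suffice and every cycle edge retains at least $3$ colors), and the cycles are then list-colored using two lemmas: every cycle other than $C_5$ is star $3$-edge-choosable, and $C_5$ is star colorable from lists of size $3$ whose union has size at least $4$ --- the connector-coloring step is arranged precisely to guarantee this union condition. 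Bridges are handled afterwards by induction on blocks. The key point is that Petersen's theorem supplies, for free, the global structure (a $2$-factor) that no local discharging argument can extract from a general cubic graph; that is the idea your proposal is missing.
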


\section{Proof of Theorem~\ref{thm:main}}

We prove Theorem~\ref{thm:main} through several steps. 
Throughout this section, by the \textit{distance} between two edges, we mean the distance 
between the corresponding vertices in the line graph. Hence, two adjacent edges are at 
distance $1$.

First, we prove two results on star edge-choosability of cycles, then we show that the statement holds for $2$-connected
subcubic graphs, and finally, we give a proof of the main theorem.
\begin{lemma}
	\label{lem:2}
	Every cycle $C$, except $C_5$, is star $3$-edge-choosable.	
\end{lemma}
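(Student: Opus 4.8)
The plan is to recast the star condition on a cycle in a purely local form. Label the edges $e_1,\dots,e_n$ cyclically and write $c_i$ for the color of $e_i$ (indices mod $n$). Since the line graph of a cycle is again a cycle, a proper coloring only requires $c_i\neq c_{i+1}$, and the sole forbidden bichromatic structures of length four are four \emph{consecutive} edges colored $a,b,a,b$. Calling position $i$ a \emph{repeat} when $c_i=c_{i+2}$, such a pattern occurs exactly when positions $i$ and $i+1$ are both repeats. So I would use throughout: a proper coloring of $C_n$ is a star coloring if and only if it has no two cyclically consecutive repeats. This also explains why $C_5$ is genuinely exceptional: in a proper $3$-coloring every color class is independent, so on $C_5$ the class sizes are forced to be $(2,2,1)$, and deleting the unique edge of the rare color leaves four consecutive edges properly $2$-colored, i.e. an $a,b,a,b$ pattern; hence $C_5$ with all lists $\{1,2,3\}$ has no star coloring.

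The small cases are quick. For $C_3$ I would color $e_1,e_2,e_3$ in turn, each time avoiding the at most two neighboring colors already used, which leaves a choice in a size-$3$ list, and there is no length-four path or cycle. For $C_4$ I would fix $c_1$, then $c_2\neq c_1$, then $c_3\in L(e_3)\setminus\{c_1,c_2\}$, so that $c_1\neq c_3$ already destroys the only dangerous pattern, and finally $c_4\in L(e_4)\setminus\{c_1,c_3\}$.

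For $n\ge 6$ the idea is greedy coloring with a reserved closing block. Coloring along the path $e_1,e_2,\dots$, when we reach $e_i$ the only active constraints are $c_i\neq c_{i-1}$ and, \emph{when} position $i-3$ is a repeat, $c_i\neq c_{i-2}$ (the other windows through $e_i$ reach uncolored edges); these forbid at most two colors, so a size-$3$ list always leaves a choice. I would set $c_1$, then $c_2\neq c_1$, then $c_3\in L(e_3)\setminus\{c_1,c_2\}$, color $e_4,\dots,e_{n-3}$ greedily, and keep $e_{n-2},e_{n-1},e_n$ in reserve. Taking $c_3\neq c_1$ is deliberate: it permanently disables the wrap-around window $(e_n,e_1,e_2,e_3)$, since that pattern needs $c_1=c_3$.

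It remains to color the last three edges, and this is where the real work and the main obstacle lie. Writing $(x,y,z)=(c_{n-2},c_{n-1},c_n)$ with $c_{n-4},c_{n-3},c_1,c_2$ already fixed, the outstanding requirements are the four properness constraints together with the conjunctive bichromatic constraints from the windows $(c_{n-5},c_{n-4},c_{n-3},x)$, $(c_{n-4},c_{n-3},x,y)$, $(c_{n-3},x,y,z)$, $(x,y,z,c_1)$ and $(y,z,c_1,c_2)$, the window $(z,c_1,c_2,c_3)$ being already dead. The binary/properness pressure removes at most two colors from each of $x,y,z$ (with $z$ forced to avoid $c_{n-1}$ and $c_1$), while each conjunctive constraint forbids only one specific value-\emph{combination} rather than a single color. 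The plan is therefore to pick $x$, then $y$, then $z$, each time keeping a spare color so as to dodge the combinatorial constraints. The hard part will be verifying that across all adversarial list assignments these choices can always be made simultaneously — that the forbidden combinations never conspire to exhaust a size-$3$ list — which I expect to require a short but careful case distinction, with $n=6$ (where the closing block abuts the starting block) checked separately.
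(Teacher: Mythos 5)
Your preparatory material is sound: the characterization of star colorings of $C_n$ as proper colorings with no two cyclically consecutive repeats is correct, the cases $C_3$ and $C_4$ are fine, and the explanation of why $C_5$ is exceptional is a nice touch. But the proposal stops exactly where the difficulty of the lemma lies: the coloring of the reserved block $e_{n-2},e_{n-1},e_n$ is never carried out --- you explicitly defer it as ``the hard part'' requiring ``a short but careful case distinction.'' This is a genuine gap, not a routine verification, because the scheme as you state it can actually dead-end. Your greedy rule imposes only the \emph{active} constraints ($c_i\neq c_{i-1}$, and $c_i\neq c_{i-2}$ when position $i-3$ is a repeat), so it permits creating new repeats, and the adversary can punish this. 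Concretely, take $n=9$, $c_1=1$, $c_2=2$, $c_3=3$, and lists $L(e_4)=\{1,2,3\}$, $L(e_5)=\{2,3,4\}$, $L(e_6)=\{1,2,4\}$, $L(e_7)=\{2,4,5\}$, $L(e_8)=L(e_9)=\{1,2,5\}$. Your rules allow the greedy phase to produce $c_4=2$, then force $c_5=4$, then allow $c_6=2$. Now in the closing block, properness and the window $(e_4,e_5,e_6,e_7)$ (active because $c_4=c_6$) force $c_7=5$; then $c_8\in\{1,2\}$, and one checks that for $c_8=1$ the windows ending at $e_1$ and $e_2$ forbid $z=5$ and $z=2$ while properness forbids $z=1$, and for $c_8=2$ the window $(e_6,e_7,e_8,e_9)$ forbids $z=5$ while properness and adjacency to $e_1$ forbid $z=2$ and $z=1$; either way $L(e_9)$ is exhausted. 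So the deferred case analysis must in particular dictate how the freedom in the greedy phase is spent --- for instance, strengthening the rule to $c_i\notin\{c_{i-1},c_{i-2}\}$ for all $i\le n-3$ (always possible with $3$-lists, and it kills every repeat among colored edges, deactivating the window into $e_{n-2}$) does make the closing analysis go through, but none of this is in the proposal.

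It is also worth noting how differently the paper avoids this closing problem altogether: it splits on the lists. If all lists are equal, the claim is the (easy, known) non-list statement for $n\neq 5$; otherwise two adjacent edges, say $e_1$ and $e_n$, have different lists, and coloring $e_1$ with a color in $L(e_1)\setminus L(e_n)$ makes the wrap-around constraint between $e_n$ and $e_1$ vacuous for free, after which a simple parity scheme (odd-indexed edges first, then even-indexed ones) finishes. Your approach, if completed along the lines above, would be more self-contained, since it never invokes the non-list result; but as submitted it is a plan whose central step is missing, and whose stated rules are insufficient as they stand.
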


\begin{proof}
	Let $C = e_1e_2\dots e_n$ be an $n$-cycle for some $n \neq 5$, and let $L$ be a $3$-list-assignment of $C$.
	It is an easy exercise to verify that $C$ is star $3$-edge-colorable for every $n \neq 5$, 
	thus we assume that the lists of at least two edges are different,
	and hence there are adjacent edges with different lists, say $L(e_1) \neq L(e_n)$. 
	We will construct a star $3$-edge-coloring $\sigma$ with the colors from $L$.
	First, let $f(e_1) = \alpha$, where $\alpha \in L(e_1) \setminus L(e_n)$.
	Now consider two cases: 
	\begin{itemize}
		\item[$(a)$] If $n$ is even, we choose $\sigma(e_{2i+1}) \neq \sigma(e_{2i-1})$, for $i=1,\ldots,\frac{n-2}{2}$. 
			Additionally, we choose $\sigma(e_{n-1}) \neq \sigma(e_1)$, which is always possible. 			
			Then, we choose $\sigma(e_{2i})$ distinct from the colors of adjacent edges, for $i = 1,\ldots,\frac{n}{2}$. 
			This clearly induces a star edge-coloring of $C$.
		\item[$(b)$] If $n$ is odd, we similarly color $\sigma(e_{2i+1}) \neq \sigma(e_{2i-1})$, for $i=1,\ldots,\frac{n-3}{2}$. 
			Then, we choose $\sigma(e_2)$ distinct from the colors of adjacent edges. 
			Now, we can choose $\sigma(e_n)$ distinct from $\sigma(e_{n-2})$ and $\sigma(e_2)$ ($\sigma(e_n) \neq \sigma(e_1)$ via the choice of $\sigma(e_1)$). 
			Finally, we choose $\sigma(e_{2i})$ distinct from the colors of adjacent edges, for $i=2,\ldots,\frac{n-1}{2}$.
			Again, one can observe that we have used at least $3$ colors for each path on $4$ edges. 
	\end{itemize}	
\end{proof}

\begin{lemma}
	\label{lem:3}
	A $5$-cycle $C = e_1e_2e_3e_4e_5$ is star $L$-edge-colorable if for every $i\in \set{1,\dots,5}$ it holds $|L(e_i)| \ge 3$ and
	$$
		\left|\bigcup_{i=1}^5 L(e_i) \right| \ge 4\,.
	$$
\end{lemma}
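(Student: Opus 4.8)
The plan is to aim for a very rigid kind of star coloring: color four consecutive edges with four pairwise distinct colors and then fill in the fifth edge almost freely. So I would first try to choose $\sigma(e_i)\in L(e_i)$ for $i=1,2,3,4$ so that $\sigma(e_1),\sigma(e_2),\sigma(e_3),\sigma(e_4)$ are pairwise distinct, and then pick $\sigma(e_5)\in L(e_5)\setminus\{\sigma(e_1),\sigma(e_4)\}$, which is possible since $|L(e_5)|\ge 3$. The first thing to verify is that any such assignment is a star $L$-edge-coloring. As $C$ has no four-cycle, the only thing to exclude is a bichromatic path on four edges, and in $C$ these are exactly the five paths obtained by deleting a single edge. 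The path $e_1e_2e_3e_4$ uses four distinct colors, hence is not bichromatic. A four-edge path $f_1f_2f_3f_4$ is bichromatic only if $\sigma(f_1)=\sigma(f_3)$ and $\sigma(f_2)=\sigma(f_4)$; for each of the other four paths at least one of these two required equalities is an equality between two of $e_1,e_2,e_3,e_4$, so it fails. Properness is immediate from the same distinctness together with $\sigma(e_5)\notin\{\sigma(e_1),\sigma(e_4)\}$.

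It remains to secure four consecutive edges with pairwise distinct colors. I would read this as asking for a system of distinct representatives of $L(e_1),L(e_2),L(e_3),L(e_4)$. Since every list has at least three colors, Hall's condition can only fail on the full family, i.e. if the union of these four lists has at most three colors; because each list is then a three-subset of a three-element set, this happens precisely when all four lists coincide with one common set of exactly three colors. So the single obstruction is four consecutive identical three-element lists.

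To avoid the obstruction I would exploit the freedom to rotate, that is, to choose which edge is treated last. Call an index $j$ \emph{bad} if the four lists other than $L(e_j)$ are all equal to one common three-element set. If two distinct indices $j\ne k$ were bad, then the three edges different from $e_j$ and $e_k$ would force the two common three-sets to coincide; since the two bad events jointly involve all five edges, every list would equal this one three-set, giving $\bigl|\bigcup_i L(e_i)\bigr|=3$ and contradicting the hypothesis. Hence at most one index is bad, so at least four rotations are good. Choosing a good rotation, relabeling it as $e_1,\dots,e_5$, and finishing as above completes the argument.

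The heart of the matter, and the only place the assumption $\bigl|\bigcup_i L(e_i)\bigr|\ge 4$ is really used, is the last step: it is exactly this hypothesis that forbids two bad rotations and thereby rules out the degenerate all-equal case in which $C_5$ genuinely cannot be star colored. I expect the short counting argument establishing ``at most one bad index'' to be the main obstacle, while the SDR existence and the verification that a rainbow four-path with a free fifth edge is star are routine.
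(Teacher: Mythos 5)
Your proof is correct, and it takes a genuinely different route from the paper's. The paper disposes of this lemma in two lines by reusing the procedure of case $(b)$ of Lemma~\ref{lem:2}: the hypothesis supplies two adjacent edges whose lists have union of size at least $4$, so after relabeling one colors $e_1$ with a color of $L(e_1)\setminus L(e_5)$ and then proceeds greedily in the order $e_3,e_2,e_5,e_4$ ($e_3$ avoiding $\sigma(e_1)$; $e_2$ and $e_4$ avoiding their colored neighbours; $e_5$ avoiding $\sigma(e_2),\sigma(e_3)$, with $\sigma(e_5)\neq\sigma(e_1)$ automatic from the initial choice). Interestingly, the coloring this greedy produces has exactly the shape you aim for---the path $e_5e_1e_2e_3$ is rainbow and $e_4$ merely avoids its two neighbours---so the two arguments differ not in the target coloring but in the existence mechanism: you invoke Hall's theorem to get a system of distinct representatives on four consecutive lists, observe that the only Hall violator is four identical $3$-lists, and then rule out two ``bad'' rotations by a counting argument from the union hypothesis. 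Your version costs slightly more machinery (Hall plus the rotation count), but it is self-contained, needing nothing from Lemma~\ref{lem:2}, and it isolates the unique extremal configuration---five identical $3$-lists---which makes transparent exactly where the hypothesis $\left|\bigcup_i L(e_i)\right|\ge 4$ enters and why $C_5$ had to be excluded from Lemma~\ref{lem:2}. The paper's version is shorter, but only because the greedy of Lemma~\ref{lem:2}, with its crucial first step $\sigma(e_1)\notin L(e_5)$, is already in place; as a standalone argument yours is arguably the cleaner of the two.
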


\begin{proof}
	The union of the lists of at least two edges contains at least $4$ colors, so we may simply repeat the coloring procedure from the case $(b)$
	in the proof of Lemma~\ref{lem:2} to obtain a coloring of $C$.
\end{proof}

We define the \textit{supergraph} $\widehat{G}$ of a $2$-connected subcubic graph $G$ 
as the graph obtained by introducing edges between pairs of vertices of degree $2$ in $G$
until $G$ contains at most one vertex of degree $2$. 
In what follows, we will use a decomposition of $\widehat{G}$ guaranteed by the well-known result of Petersen~\cite{Pet1891}.
\begin{theorem}[Petersen, 1891]
	\label{lem:pet}
	Let $G$ be a subcubic graph without bridges and with at most one vertex of degree at most $2$. 
	Then, there is a matching $M$ that covers all vertices of degree $3$.
\end{theorem}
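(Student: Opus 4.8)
The plan is to recognize this as Petersen's classical one-factor theorem and to recover it from Tutte's $1$-factor theorem by an edge-counting argument; the only work beyond the textbook cubic case is to accommodate the single vertex of degree at most $2$. Throughout, write $V_3$ for the set of vertices of degree $3$, and recall Tutte's theorem: a graph $H$ has a perfect matching if and only if $o(H-S)\le |S|$ for every $S\subseteq V(H)$, where $o$ denotes the number of odd components.

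First I would dispose of the genuinely cubic case. If $G$ has no vertex of degree at most $2$, then $G$ is cubic and bridgeless, and I would verify Tutte's condition directly. Fix $S\subseteq V(G)$ and let $C$ be an odd component of $G-S$. Summing degrees over $V(C)$ gives $3|V(C)|$, which is odd, so the number of edges joining $C$ to $S$ is odd; since $G$ is bridgeless this number cannot equal $1$, hence it is at least $3$. As every such edge leaves $S$, their total number is at most $\sum_{v\in S}\deg(v)=3|S|$, and therefore $3\,o(G-S)\le 3|S|$, i.e. $o(G-S)\le|S|$. Tutte's theorem then yields a perfect matching, which saturates every vertex and in particular all of $V_3$.

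It remains to treat the case of a unique vertex $v_0$ with $\deg(v_0)\le 2$. Degree $1$ is impossible, as the edge at $v_0$ would be a bridge, so $\deg(v_0)\in\{0,2\}$. Here I would delete $v_0$ and show that $H=G-v_0$ has a perfect matching; since $V(H)=V(G)\setminus\{v_0\}=V_3$, such a matching saturates exactly the degree-$3$ vertices, as required, and uses only edges of $G$. To check Tutte's condition for $H$, take $S'\subseteq V(H)$ and put $S=S'\cup\{v_0\}$, so that $H-S'$ and $G-S$ have the same components. Every such component now avoids $v_0$ and consists only of degree-$3$ vertices, so the previous parity-and-bridgelessness argument again forces at least $3$ boundary edges per odd component; counting edges leaving $S$ gives $3\,o(H-S')\le\sum_{v\in S}\deg(v)=3|S'|+\deg(v_0)\le 3|S'|+2$, whence $o(H-S')\le|S'|$ because the left-hand side is an integer.

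The main obstacle is not the counting itself but making sure the hypotheses are used at exactly the right places: bridgelessness is precisely what upgrades the parity bound ``odd, hence $\ne 1$'' to ``$\ge 3$'', and it is essential that the at-most-one low-degree vertex be handled so that the single possibly-unsaturated vertex is $v_0$ rather than a vertex of $V_3$ — which is why I delete $v_0$ at the outset and why the slack $+2$ in the final inequality is harmless after rounding down. One should also sanity-check the degenerate readings (a component meeting $S$ in several edges at a common vertex, and the case $S'=\varnothing$, which forces $H$ to have only even components) to be sure that no odd component slips through with just one or two boundary edges.
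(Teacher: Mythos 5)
The paper does not actually prove this statement---it is quoted as Petersen's classical theorem with a citation to the 1891 paper---so there is no in-paper argument to compare yours against; what matters is whether your blind proof stands on its own, and it does. Your argument is the standard modern derivation of Petersen's theorem from Tutte's $1$-factor theorem: for an odd component $C$ of $G-S$, the parity of $3|V(C)|$ makes the number of edges leaving $C$ odd, bridgelessness rules out exactly one leaving edge, so each odd component sends at least $3$ edges into $S$, and comparing with $\sum_{v\in S}\deg(v)\le 3|S|$ verifies Tutte's condition. Your treatment of the exceptional vertex $v_0$ is also correct and is the only genuinely non-textbook step: degree $1$ is excluded since a pendant edge is a bridge, and for $\deg(v_0)\in\{0,2\}$ you delete $v_0$, take $S=S'\cup\{v_0\}$, and observe that the extra $\deg(v_0)\le 2$ in the degree sum is absorbed by integrality, so $H=G-v_0$ satisfies Tutte's condition and its perfect matching saturates exactly the degree-$3$ vertices of $G$. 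Two small points you should make explicit, though neither is a gap: (i) the paper works with multigraphs (parallel edges allowed), so one should remark that Tutte's theorem and your edge counts carry over when edges are counted with multiplicity---they do, since parallel edges change neither the components of $G-S$ nor the parity argument, and a unique edge leaving a component is still a bridge; (ii) $G$ is not assumed connected, but your counting never uses connectivity, so this is harmless (the case $S=\varnothing$, respectively $S'=\varnothing$, simply certifies that all components are even).
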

Notice that since there is always an even number of odd vertices in a graph, an eventual vertex $v$ of degree $2$ from the above theorem 
is never covered by $M$ and hence, as an immediate corollary of the above we have that $\widehat{G}$ contains a $2$-factor $F$
where, clearly, $v$ is a part of a cycle.
Connecting the cycles of $F$ with the minimum number of edges of $M$ such that the resulting graph is connected,
we obtain a spanning cactus $\cac$ of $\widehat{G}$.
Recall that a \textit{cactus} is a connected graph $H$ such that any two cycles in $H$ have at most one vertex in common,
and hence, in the case of subcubic graphs, any two cycles in $H$ are vertex-disjoint.
\begin{corollary}
	\label{cor:cac}
	A supergraph $\widehat{G}$ of any $2$-connected subcubic graph $G$ can be decomposed into 
	a spanning cactus $\cac$ and a matching $M'$ such that every vertex of $\widehat{G}$ is contained in
	some cycle of $\cac$.
\end{corollary}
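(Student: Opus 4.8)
The plan is to carry out rigorously the construction outlined just above the corollary: use Petersen's theorem to peel off a $2$-factor, and then reconnect its cycles by a judiciously chosen subset of the leftover matching edges so that the result is a cactus, with the unused matching edges forming $M'$.

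First I would confirm that Theorem~\ref{lem:pet} applies to $\widehat{G}$. By construction $\widehat{G}$ is subcubic and has at most one vertex of degree~$2$; moreover, since $G$ is $2$-connected and adding edges cannot lower connectivity, $\widehat{G}$ is $2$-connected and therefore bridgeless. Thus there is a matching $M$ covering every vertex of degree~$3$. Putting $F = \widehat{G} - M$, each degree-$3$ vertex loses exactly its $M$-edge, while the possible degree-$2$ vertex $v$ (which, as noted above, $M$ cannot cover) loses none, so $F$ is $2$-regular. Hence $F$ is a $2$-factor, a disjoint union of cycles $C_1,\dots,C_t$ spanning $\widehat{G}$, and every vertex lies on one of them.

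Next I would join these cycles into a single connected cactus. The edge set of $\widehat{G}$ splits into the cycle edges (lying inside the $C_j$) and the edges of $M$, so any two distinct cycles of $F$ can be joined only by edges of $M$. Contracting each $C_j$ to a single vertex yields a multigraph $H$ whose edges are the images of the $M$-edges; $H$ is connected because $\widehat{G}$ is. I would then pick a spanning tree of $H$ and let $M_T \subseteq M$ be the set of matching edges giving rise to its edges (each joining two distinct cycles). Setting $\cac = F \cup M_T$ and $M' = M \setminus M_T$ produces a candidate decomposition.

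Finally I would verify the three required properties, which is where the main difficulty lies, namely in checking that $\cac$ really is a cactus. Since the spanning tree is acyclic, any cycle of $\cac$ would, after contracting each $C_j$, yield a cycle among the tree edges; as none exists, every cycle of $\cac$ must lie inside a single $C_j$, and inside $C_j$ the only $\cac$-edges are those of $C_j$ itself (the edges of $M_T$ run between distinct cycles). Thus the cycles of $\cac$ are exactly the pairwise vertex-disjoint cycles $C_1,\dots,C_t$; together with the connectedness inherited from the spanning tree, this shows that $\cac$ is a spanning cactus in which every vertex lies on a cycle. Since $M' \subseteq M$, the set $M'$ is a matching, and $E(\cac)$ and $M'$ partition $E(\widehat{G})$, completing the decomposition. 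The reason for passing to the contracted graph and selecting a spanning tree, rather than an arbitrary connecting set of $M$-edges, is precisely to guarantee this cactus property.
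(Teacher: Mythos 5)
Your proposal is correct and takes essentially the same route as the paper: Petersen's theorem applied to the bridgeless supergraph $\widehat{G}$ gives a matching $M$ whose removal leaves a spanning $2$-factor, and the cycles of that $2$-factor are then joined into a spanning cactus by a minimal set of $M$-edges, the leftover edges forming $M'$. Your contraction-and-spanning-tree formulation is just a rigorous rendering of the paper's ``minimum number of connecting edges'' step, making explicit why the result $\cac$ has no cycles other than the pairwise disjoint $2$-factor cycles.
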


Now, we prove the case of $2$-connected graphs.

\begin{lemma}
	\label{thm:2con}
	Every $2$-connected subcubic graph is star $7$-edge-choosable. 
\end{lemma}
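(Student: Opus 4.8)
The plan is to reduce everything to the supergraph $\widehat G$ and then to color the cactus-plus-matching decomposition supplied by Corollary~\ref{cor:cac}. First I would note that it is enough to star $L$-edge-color $\widehat G$: the edges of $G$ keep their prescribed lists, every edge added when passing to $\widehat G$ is given an arbitrary list of seven colors, and since deleting edges can neither spoil properness nor create a bichromatic $P_4$ or $C_4$, the restriction to $E(G)$ of any star $L$-coloring of $\widehat G$ is the desired coloring of $G$. As $G$ is $2$-connected, $\widehat G$ is bridgeless with at most one vertex of degree $2$, so Corollary~\ref{cor:cac} gives $\widehat G = \cac \cup M'$ with $\cac$ a spanning cactus all of whose cycles are pairwise vertex-disjoint and $M'$ a matching. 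The decisive structural facts are that the cycles of $\cac$ are linked into a tree of cycles by the bridges of $\cac$, that every vertex carries its two cycle-edges together with at most one further edge, and that consequently the bridges and the edges of $M'$ are pairwise non-adjacent; in particular a vertex incident to an edge of $M'$ is incident to no bridge.

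I would color $\cac$ first, processing its cycles in a breadth-first order from a root cycle. Each non-root cycle $C$ is attached to the already-colored part of $\cac$ by a single parent bridge $b$, which I color just before $C$ (it meets only two colored cycle-edges together with a few star constraints, so a color remains). Only the two cycle-edges of $C$ incident with the endpoint of $b$ on $C$ then have reduced lists; deleting from each of them the color of $b$ and the at most two colors that would complete a bichromatic $P_4$ running from $b$ into the parent cycle leaves lists of size at least $3$. Since at most two of the lists on $C$ are shrunk, the remaining lists are still full, so if $C$ is a $5$-cycle the union of its five lists has size at least $7$ and Lemma~\ref{lem:3} applies, while for every other length Lemma~\ref{lem:2} applies; this is exactly where Lemma~\ref{lem:3} is needed. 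The one subtlety here is to fold the within-cycle star constraints created by the colored $b$ (the bichromatic paths $b\,e_1\,e_2\,e_3$) into the coloring procedure of Lemma~\ref{lem:2}, which the extra slack in the list sizes permits. Carrying this through the whole tree colors all of $\cac$.

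It then remains to extend the coloring to $M'$, and this is where I expect the real difficulty to lie. Each edge $uv \in M'$ is adjacent only to colored cycle-edges, namely the two cycle-edges at $u$ and the two at $v$, and to no bridge and no other edge of $M'$; hence properness alone can forbid up to four of the seven colors. On top of this, every such edge lies on numerous potential bichromatic $P_4$'s---those of the shapes $uv\,c\,d\,d'$ reaching into a neighbouring cycle and those of the shape $a\,uv\,c\,d$ crossing $uv$---each of which can forbid a further color whenever two already-colored edges happen to agree. A careless coloring of $\cac$ can therefore leave an edge of $M'$ with no admissible color.

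The main obstacle is thus to balance this budget, and I would resolve it not by treating the two phases independently but by coloring $\cac$ with the matching already in mind: the freedom granted by Lemmas~\ref{lem:2} and~\ref{lem:3} should be used to choose the cycle-colorings so that each edge of $M'$ retains an admissible color, and the edges of $M'$ should themselves be colored in an order that limits how many of the conditional star constraints are active at once. The tightest configurations to check are short cycles carrying several incident edges of $M'$, and these drive the case analysis; the single possible vertex of degree $2$, lying on one cycle and meeting no matching edge, is handled trivially. The remaining work is the lengthy but routine verification that the required choices can always be made.
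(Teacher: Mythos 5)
Your structural setup (pass to $\widehat G$, invoke Corollary~\ref{cor:cac}) is the same as the paper's, but you color the two parts of the decomposition in the \emph{opposite} order, and this creates a gap that your proposal does not close. The paper colors $M'$ \emph{first}: its edges are colored greedily so that any two edges of $M'$ at distance at most $2$ receive distinct colors (possible because each edge of $M'$ sees at most four others of $M'$ within distance $2$), then the connectors are colored, and only then the cycles, after deleting from each cycle edge's list the colors of all already-colored edges within distance $2$; each cycle edge still retains at least $7-4\ge 3$ colors. With this order every edge of $M'$ and every connector carries a color appearing on no edge within distance $2$ of it, and since in a bichromatic path or cycle of length four every edge has an edge of the same color at distance exactly $2$, these edges can never lie on one. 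The star condition therefore only has to be enforced \emph{inside} each cycle, which is exactly what Lemmas~\ref{lem:2} and~\ref{lem:3} provide; the one delicate point, the union hypothesis of Lemma~\ref{lem:3} for $5$-cycles, is secured when the connectors are colored.

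You instead color $\cac$ first and $M'$ last, and you correctly diagnose the consequence: an edge $uv\in M'$ is then adjacent to four colored cycle edges and is subject to conditional star constraints reaching distance $2$ into both cycles; in the worst case these forbid more than $7$ colors (for instance, if the two colored edges at $u$ repeat the colors of the two colored edges at $v$, then every edge at distance $2$ from $uv$ contributes a forbidden color, giving up to $2+8>7$ exclusions). Your proposed remedy --- exploit ``the freedom granted by Lemmas~\ref{lem:2} and~\ref{lem:3}'' to choose cycle colorings that protect every matching edge, together with a good ordering of $M'$ --- is not an argument. Those lemmas are pure existence statements; they quantify no flexibility you could exploit. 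Moreover, each cycle's coloring constrains many matching edges while each matching edge is constrained by two different cycles, so you are facing a genuinely global constraint-satisfaction problem, not a ``lengthy but routine verification.'' This is precisely the difficulty that the paper's order of coloring is designed to make vanish, so the missing step is the heart of the proof rather than a peripheral detail.
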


\begin{proof}
	Let $G$ be a $2$-connected subcubic graph. 	
	We may assume $G$ has at least $6$ vertices, otherwise $G$ has at most $7$ edges and the result trivially follows. 
	We will prove the theorem by presenting a construction of a star $7$-edge-coloring of $\widehat{G}$ using a decomposition 
	into a spanning cactus and a matching. Clearly, a star edge-coloring of $\widehat{G}$ induces a star edge-coloring of $G$.

	By Corollary~\ref{cor:cac}, $\widehat{G}$ contains a spanning cactus $\cac$ such that every vertex of $\widehat{G}$ is 
	contained in some cycle of $\cac$, and the edges of $\widehat{G}$ not in $\cac$ form a matching $M'$, i.e. $M' = E(\widehat{G})-E(\cac)$.
	Observe that for any edge $e$ of $\widehat{G}$ there are at most four edges of $M'$ at distance at most $2$.	
	
	Let $L$ be a $7$-edge-list-assignment for the edges of $\widehat{G}$.	
	Color the edges of $M'$ in such a way that the edges at distance at most $2$ receive distinct colors. 
	Since for every edge $e \in M$ there are at most $4$ edges from $M'$ at distance at most $2$ from $e$ this is always possible using e.g. the greedy method.		
	Now, for every edge $e$ of $\cac$ remove the colors of the edges of $M'$ at distance at most $2$ from $e$ from the list $L(e)$. 	
	Note that after removal each non-colored edge retains a list of size at least $7 - m(e) \ge 3$, where $m(e)$ denotes the number of edges of $M'$
	within distance $2$ from $e$.
	
	It remains to color the edges of $\cac$. The cactus $\cac$ is comprised of cycles connected by single edges; we call them \textit{connectors}.
	Recall that each connector has a list of at least $3$ available colors. We will first color the connectors one by one in such a way that
	after removing the colors of connectors from the non-colored edges of $\cac$ at distance at most $2$ from them, 
	the union of available colors on the edges of every $5$-cycle of $\cac$ will be of size at least $4$. 
	Observe also that every non-colored edge of $\cac$ retains a list of at least $3$ available colors.
	
	Such a coloring of connectors is possible. Suppose, to the contrary, that there is a $5$-cycle $C$ in $\cac$ such that 
	all the incident edges of $C$ in $\widehat{G}$ (the edges of $M'$ or connectors) except one connector $e$ have already been colored (see Figure~\ref{fig:5}).
	\begin{figure}[htp!]
		$$
			\includegraphics{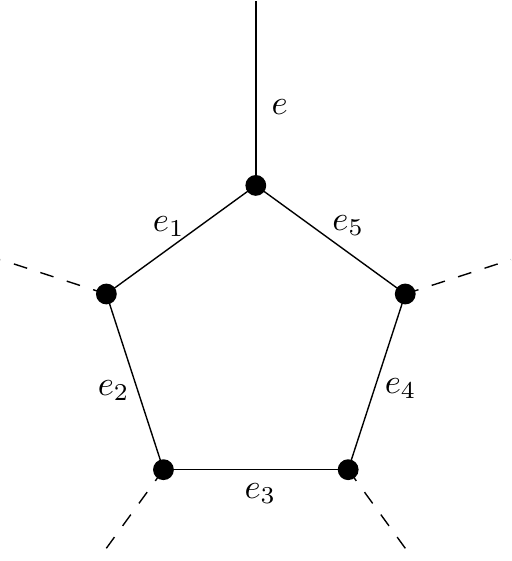}
		$$
		\caption{A $5$-cycle of $\cac$ with one yet non-colored connector $e$ and the edges of $M'$ and colored connectors depicted dashed.}
		\label{fig:5}
	\end{figure}
	While the edge $e_3$ might have only three available colors, the rest four edges of $C$ have lists of size at least $4$.
	However, in order to have all the lists the same after coloring $e$, we may assume that $L(e_i) = \set{\alpha_1,\alpha_2,\alpha_3,\alpha_4}$
	for $i \in \set{1,2,4,5}$ and $L(e_3) = \set{\alpha_1,\alpha_2,\alpha_3}$. Furthermore, the color $\alpha_4$ must be used to color $e$.
	Since $e$ has at least three available colors, we are always able to select a color for it in such a way that the $5$-cycles of $\cac$ incident with $e$ 
	(every $e$ is incident with at most two such $5$-cycles)
	will retain lists of available colors whose union will have cardinality at least $4$.
	
	Finally, we color the cycles of $\cac$. By Lemmas~\ref{lem:2} and~\ref{lem:3} this is always possible.
	Note that the given coloring of $\widehat{G}$ is indeed a star edge-coloring. 
	The edges of $M'$ and the connectors always receive colors which do not appear on the edges of
	cactus cycles at distance at most $2$ from them, hence they cannot appear in any bichromatic path or cycle of length $4$.
\end{proof}

Now, we are ready to prove the main theorem.

\begin{proof}[Proof of Theorem~\ref{thm:main}]
	We prove the theorem by contradiction. 
	Let $G$ be a subcubic graph with minimal number of vertices such that $G$ is not star $7$-edge-choosable
	for some $7$-edge-list-assignment $L$.
	Let $\widehat{G}$ be a supergraph of $G$. 
	By Lemma~\ref{thm:2con}, $\widehat{G}$ contains at least one bridge and hence at least two blocks.
	
	Since there is at most one vertex of degree $2$ in $\widehat{G}$, it contains a pendant block $B$ such that 
	$d_{\widehat{G}}(x) = 3$ for every vertex $x$ of $B$. 
	Let $e = uv$ be a bridge of $\widehat{G}$ such that $B$ is the component of $\widehat{G} - uv$ containing $v$, and $\widehat{G}_u$ 
	is the component of $\widehat{G} - uv$ containing $u$. 
	By the minimality of $G$, $\widehat{G}_u$ is star $7$-edge-choosable. 
	Let $\sigma_u$ be some coloring of $\widehat{G}_u$. 
	Color $e$ with a color from $L(e)$ that does not appear in $\sigma_u$ on the (at most $6$) edges at distance at most $2$ from $e$ in $\widehat{G}_u$.
	
	It remains to color $B$. Decompose $B$ into a spanning cactus $\cac$ and a matching $M'$. 
	Since $v$ is the only vertex of degree $2$ in $B$, both edges incident with $v$ in $B$ are the edges of some cycle of $\cac$. 
	Now, remove the color of $e$ from the lists of the edges of $B$ at distance at most $2$ from $e$ 
	(i.e. from the edges incident with $v$ and the edges adjacent to them). 
		
	Finally, as in the proof of Lemma~\ref{thm:2con}, we proceed by first coloring the edges of $M'$ greedily, 
	and then by coloring the edges of $\cac$ as described in the proof of Lemma~\ref{thm:2con}.
	Note that, since the color of $e$ is different from the colors of all the edges of $\widehat{G}$ within distance $2$,
	the edge $e$ cannot appear in any bichromatic path of length $4$. 	
	Therefore, the coloring of $B$ is a star edge-coloring, which together with $\sigma_u$ and the color of $e$ induces
	a star edge-coloring of $\widehat{G}$ and hence also of $G$. This completes the proof.
\end{proof}

\section{Discussion}

To be precise, in this paper, we only answer the first part of the question asked by Dvo\v{r}\'{a}k et al., 
while the follow up question if $6$ colors suffice for star edge-choosability of subcubic graphs remains widely open. 
In fact, it is not even known yet if subcubic planar graphs are star $6$-edge-colorable. 

On the other hand, the construction of a coloring used in the proof of Theorem~\ref{thm:main} 
can give some partial results for star $6$-edge-colorability.
In particular, if for a subcubic graph $G$ we have a decomposition into a matching $M'$ 
which can be colored with at most $3$ colors, and a subgraph with maximum degree $2$ without $5$-cycles
(and therefore also star $3$-edge-colorable), then $G$ is star $6$-edge-colorable. 

%

However, now the upper bound for star chromatic index and list star chromatic index of subcubic graphs are the same,
which might be seen as a step toward solving another question of Dvo\v{r}\'{a}k et al.~\cite{DvoMohSam13}:
\begin{question}
	Is it true that $\chisl{G} = \chis{G}$ for every graph $G$?
\end{question}

\paragraph{Acknowledgment.} 

The first author was partly supported by the Slovenian Research Agency Program P1--0383.
The second author was partially supported by project GA17--04611S of the Czech Science Foundation and by project LO1506 of the Czech Ministry of Education, Youth and Sports.
The third author was supported by the Slovak Research and Development Agency under the Contract No. APVV--15--0116 and by the Slovak VEGA Grant 1/0368/16.

\bibliographystyle{plain}
{\small
	\bibliography{MainBase}
}

\end{document}